\documentclass[letter,dvipsnames,12pt]{article}

\usepackage[english]{babel}
\usepackage[utf8]{inputenc}
\usepackage[normalem]{ulem}

\usepackage[margin=1 in, top=1 in, bottom= 1 in]{geometry}

\makeatletter
\g@addto@macro\normalsize{%
  \setlength\abovedisplayskip{7pt}
  \setlength\belowdisplayskip{7pt}
  \setlength\abovedisplayshortskip{7pt}
  \setlength\belowdisplayshortskip{7pt}
}
\makeatother

\usepackage{tocloft}

\usepackage{amsfonts}
\usepackage{mathrsfs}
\usepackage{bbm}
\usepackage{latexsym}
\usepackage{amssymb}
\usepackage{mathtools}

\usepackage{enumitem}
\setlist{nolistsep} 	
\usepackage{amsthm}

\usepackage{xcolor}
\definecolor{Color1}{rgb}{0.0, 0.42, 0.47}
\definecolor{Color2}{rgb}{0.78, 0.11, 0.0}

\usepackage{titlesec}
\titleformat{\section}
  {\large\center\bfseries}
  {\thesection.}{.7em}{}
\titlespacing*{\section}{0pt}{3.5ex plus 0ex minus 0ex}{1.5ex plus 0ex}
\titleformat{\subsection}
  {\center\bfseries}
  {\thesubsection.}{.7em}{}
\titlespacing*{\subsection}{0pt}{3.5ex plus 0ex minus 0ex}{1.5ex plus 0ex}
\titleformat{\subsubsection}
  {\center\bfseries}
  {\thesubsubsection.}{.7em}{}
\titlespacing*{\subsubsection}{0pt}{3.5ex plus 0ex minus 0ex}{1.5ex plus 0ex}

\addto\captionsenglish{}

\usepackage{titling}
\makeatletter
\renewenvironment{abstract}{
\begin{center}
{\bfseries \large\abstractname\vspace{\z@}}
\end{center}
\quotation
}
\makeatother

\usepackage[linktocpage=true]{hyperref}
\usepackage[capitalize]{cleveref}
\hypersetup{citecolor = Color1,colorlinks,
			linkcolor = black,
			urlcolor = Color2}

\newtheoremstyle{plain}{3mm}{3mm}{\slshape}{}{\bfseries}{.}{.5em}{}
\newtheoremstyle{definition}{2mm}{2mm}{}{}{\bfseries}{.}{.5em}{}
\theoremstyle{plain}
	
\newtheorem{Theorem}{Theorem}

\newtheorem{claim}{Claim}
\theoremstyle{definition}

\newtheorem{Remark}[Theorem]{Remark}

\usepackage{chngcntr}
\counterwithin{Theorem}{section}

\numberwithin{equation}{section}

\allowdisplaybreaks

\newcommand{\Erdos}{Erd\H{o}s}
\newcommand{\Folner}{F\o{}lner}
\newcommand{\Szemeredi}{Szemer\'{e}di}

\newcommand{\N}{\mathbb{N}}

\newcommand{\C}{\mathbb{C}}

\renewcommand{\epsilon}{\varepsilon}
\renewcommand{\leq}{\leqslant}

\renewcommand{\setminus}{\backslash}

\newcommand{\1}{\mathbf{1}}
\renewcommand{\d}{\,\mathsf{d}}

\author{By~~{\scshape Bryna Kra}~~and~~{\scshape Joel~Moreira}~~and~~{\scshape Florian~K.~Richter}\\~~and~~{\scshape Donald Robertson}}

\title{\textbf{A short proof of \Erdos{}'s $B+C$ conjecture}
{\normalfont \normalsize \em{ Dedicated to Vitaly Bergelson on his 75$^{\mathrm{th}}$ birthday}}
}

\begin{document}

\maketitle

\abstract{We give a short proof of the fact that every set of natural numbers with positive upper Banach density contains the sum of two infinite sets.
The approach simplifies earlier existing proofs.}


\section{Introduction}
\label{sec_intro}

In the late 1970s and early 1980s, 
\Erdos{} sought to characterize the types of subsets of integers that contain infinite sumsets, a line of inquiry that culminated in two noteworthy conjectures. Both have since been resolved and we state them as theorems below.

A \emph{\Folner{} sequence} is a sequence $\Phi=(\Phi_N)_{N\in\N}$ of finite subsets of $\N$ satisfying 
\[
\lim_{N \to \infty} \frac{|\Phi_N \cap (\Phi_N + 1)|}{|\Phi_N|} = 1.
\]
A set $A \subset \N$ has \emph{positive upper Banach density} if
\[
\lim_{N \to \infty} \dfrac{|A \cap \Phi_N|}{|\Phi_N|} > 0
\]
for some \Folner{} sequence $\Phi=(\Phi_N)_{N\in\N}$.

We begin with the weaker conjecture, which was the second conjecture in chronological order.
\begin{Theorem}[\Erdos{}'s $2^{\mathrm{nd}}$ sumset conjecture]
\label{thm_mrr}
For any $A\subset\N$ with positive upper Banach density, there exist infinite sets $B,C\subset \N$ such that $B+C=\{b+c:b\in B,~c\in C\}\subset A$.
\end{Theorem}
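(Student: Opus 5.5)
We plan to follow the dynamical strategy of Moreira--Richter--Robertson, streamlined. The first step is the Furstenberg correspondence principle, which converts $A$ into dynamics. Write $\Phi$ for the \Folner{} sequence witnessing positive density. After passing to a subsequence, we obtain the following objects:
\begin{itemize}
\item the compact metric space $X=\{0,1\}^{\N\cup\{0\}}$ with the shift $T$;
\item the point $a=\1_A\in X$;
\item the clopen set $E=\{x: x(0)=1\}$;
\item a $T$-invariant Borel probability measure $\mu$, obtained as a weak$^*$ limit of $\frac{1}{|\Phi_N|}\sum_{n\in\Phi_N}\delta_{T^n a}$, with $\mu(E)>0$.
\end{itemize}
With these choices, $n\in A$ if and only if $T^na\in E$. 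Hence $B+C\subset A$ amounts to finding infinite $B,C$ with $T^{b+c}a\in E$ for all $b\in B$, $c\in C$.

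The key notion is an \emph{\Erdos{} progression}: a triple $(x_0,x_1,x_2)\in X^3$ such that, for some sequence $c_n\to\infty$, we have $T^{c_n}x_0\to x_1$ and $T^{c_n}x_1\to x_2$. Equivalently, $(x_1,x_2)$ lies in the orbit closure of $(x_0,x_1)$ under $T\times T$ along times tending to infinity. The main proposition we would prove is the following: if $x_1\in E$ and $x_2\in E$ (with $E$ open) and $(a,x_1,x_2)$ is an \Erdos{} progression, then $A$ contains $B+C$ with $B,C$ infinite.

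The proof of this proposition is an inductive construction. Choose $b_1<c_1<b_2<c_2<\dots$ together with shrinking open neighborhoods, so that the following conditions hold:
\begin{itemize}
\item $T^{b_i}a$ is close to $x_1$;
\item $T^{c_j}x_1\in E$ and $T^{c_j}a$ is close to $x_1$, which is possible by the progression property;
\item using continuity of $T^{c_j}$, for $i>j$ the point $T^{b_i}a$ is so close to $x_1$ that $T^{b_i+c_j}a\in E$;
\item for $i\le j$, $T^{c_j+b_i}a\in E$, which follows because $T^{c_j}a$ is close to $x_1$ and $T^{b_i}x_1\in E$.
\end{itemize}
The last condition requires arranging symmetrically that $T^{b_i}x_1\in E$ as well. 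To do this, we choose both $b$'s and $c$'s from the return times in the definition of the progression, and at each stage we maintain a finite list of open conditions; this is the standard ``open sets shrinking along the sequence'' argument.

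The main obstacle is the existence of such an \Erdos{} progression with $a$ as its first coordinate and $x_1,x_2\in E$. Here we need $a$ to be generic-like for $\mu$ along $\Phi$, and we use an ergodic decomposition plus the Kronecker factor. We would proceed in two steps.

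\begin{enumerate}
\item First, show that for $\mu$-almost every $x_1$ there is $x_2$ with $(x_1,x_2)$ in the closure of $\{(T^n a,T^nx_1)\}$. Using the continuous-factor/Kronecker structure, the natural measure to aim for is $\lambda=\int \mu_{s}\times\mu_{s+t}\,d m(t)$, or more simply the measure
\[\sigma=\lim \frac{1}{|\Phi_N|}\sum_{n\in\Phi_N}\delta_{(T^na,\,T^n x_1)}\]
along a subsequence, for suitably chosen $x_1$.
\item Second, check that $\sigma(E\times E)>0$. The plan is to compute this via the projection onto the Kronecker factor $Z$ with rotation by $\alpha$. This reduces to the inequality
\[\int_Z \mathbb{E}(\1_E\mid Z)(z)\,\mathbb{E}(\1_E\mid Z)(z+t)\,dm \,dt=\Big\|\,\widehat{\mathbb{E}(\1_E\mid Z)}\,\Big\|\text{-type positivity}\ \ge \mu(E)^2>0,\]
or more directly to the fact that $\int f(z)f(z+t)\,dm(z)\,dm(t)=(\int f)^2$.
\end{enumerate}

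The delicate points are two. First, the point $a$ need not be generic for an ergodic measure, so we would first pass to an ergodic component of positive measure on $E$ and replace $a$ by a point whose orbit along a modified \Folner{} sequence is generic. Second, we need continuity of the factor map to $Z$ at the relevant points; we would handle this by choosing a topological model in which the Kronecker factor map is continuous, or by approximating with continuous eigenfunctions, and checking that the progression condition survives these replacements.
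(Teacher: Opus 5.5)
\textbf{There is a genuine gap.} The combinatorial proposition is true; it only needs $x_2\in E$. But the existence statement you single out as the main obstacle is false, so the plan cannot be completed as stated.

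\emph{Counterexample.} Take $A$ to be the odd numbers. Then $a=\1_A$ satisfies $T^2a=a$, the measure $\mu=\frac12(\delta_a+\delta_{Ta})$ is ergodic, $a$ is generic for it, and $\mu(E)=\frac12$. Suppose $T^{c_n}a\to x_1$. Then $c_n$ has eventually constant parity $p$ and $x_1=T^pa$. Hence $T^{c_n}x_1=T^{c_n+p}a=a$ for large $n$, so $x_2=a\notin E$. There is no \Erdos{} progression $(a,x_1,x_2)$ with $x_2\in E$, let alone one with $x_1,x_2\in E$.

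\emph{Why this is structural.} You draw both $B$ and $C$ from one sequence of return times. The hypothesis of your proposition then already yields, by the same diagonal argument, an infinite $B'$ with $b+b'\in A$ for all distinct $b,b'\in B'$. That is impossible for the odd numbers, and it is exactly why Theorem~\ref{thm_kmrr} needs a shift $t$.

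\emph{A shift would not rescue it.} Inserting a shift would commit you to that much harder theorem, and your step 2 would not suffice there either. The identity $\int\int f(z)f(z+t)\,dm(z)\,dm(t)=(\int f)^2$ averages over the translate, that is, over the choice of $x_1$. You need positivity for one fixed $x_1$ that itself lies in $E$. Moreover, $\sigma(E\times E)>0$ only gives some point of $E\times E$ in the support of $\sigma$, not one whose first coordinate is $x_1$.

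\textbf{The missing idea} is to make $B$ and $C$ come from genuinely different sequences. The paper does this as follows.
\begin{enumerate}
\item Fix a point $y$ in the support of $\mu$ that is generic along a subsequence of $\Phi$.
\item Let $\lambda$ be the measure for which $(x,y)$ is generic under $T\times T$.
\item Build $b_1<b_2<\cdots$ with $T^{b_i}x\to y$ and $\lambda\big((T^{-b_1}E\cap\dots\cap T^{-b_j}E)\times E\big)>0$ for every $j$.
\item Genericity of $(x,y)$ then supplies $c_j$ with $T^{c_j}y\in E$ and $T^{c_j+b_i}x\in E$ for all $i\le j$. Passing to subsequences gives both iterated limits in \eqref{eqn_fuzzy_Erdos_square}.
\end{enumerate}
The Kronecker factor enters only in the step that adds $b_{j+1}$. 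Write $\1_E=f_c+f_{\mathrm{wm}}$. Continuity of the eigenfunctions comes from property (iv) of the correspondence principle; this is the role of the topological model you mention. It makes $\|T^bf_c\otimes 1-1\otimes f_c\|_{L^2(\lambda)}$ small for every $b$ with $T^bx$ close to $y$. Weak mixing of $f_{\mathrm{wm}}\otimes 1$ in $L^2(\lambda)$ then kills the remaining term for infinitely many such $b$. In the odd-numbers example this produces $B$ and $C$ of opposite parities, which no single sequence of return times can do.
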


Theorem~\ref{thm_mrr} was first proved in~\cite{MRR19}. The argument was later simplified significantly by Host in~\cite{Host19}. Another proof, following a different and partly easier argument, was given in Section~3 of~\cite{KMRR24a}.

\begin{Theorem}[\Erdos{}'s $1^{\mathrm{st}}$ sumset conjecture]
\label{thm_kmrr}
For any $A\subset\N$ with positive upper Banach density, there exist $b_1<b_2<b_3<\ldots\in\N$ and $t\in\N$ such that $\{b_i+b_j: i,j\in\N,~i\neq j\}\subset A-t$.
\end{Theorem}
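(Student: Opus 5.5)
The plan is to translate the combinatorial statement into dynamics via the Furstenberg correspondence principle and then locate a suitable three-term "dynamical progression" using the Kronecker factor. First, let $a=\1_A\in\{0,1\}^{\N}$ (extended to $\{0,1\}^{\mathbb{Z}}$ if convenient) and let $X$ be the orbit closure of $a$ under the shift $T$. Because $A$ has positive upper Banach density along a \Folner{} sequence $\Phi$, we can pass to a subsequence and find a $T$-invariant measure $\mu$ on $X$ for which $a$ is generic along $\Phi$. The clopen set $E=\{x: x(0)=1\}$ then satisfies $\mu(E)>0$. Replacing $\mu$ by a suitable ergodic component, and if necessary replacing $a$ by a point $T^{t}a$ together with a related generic point, we may assume that $\mu$ is ergodic. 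We may also assume that $a$ is "quasi-generic" in the sense that the averages of $\delta_{T^n a}$ over $\Phi_N$ converge to $\mu$.

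Second, we reduce the theorem to the existence of an \emph{\Erdos{} progression}. This is a triple $(x_0,x_1,x_2)\in X^3$ with $x_0=a$, for which there is a sequence $c_n\to\infty$ with $(T^{c_n}x_0,T^{c_n}x_1)\to(x_1,x_2)$, and which satisfies $x_2\in T^{-t}E$ for some $t$. Given such a triple, we choose $b_1<b_2<\cdots$ inductively from the $c_n$. Since $T^{b_i}x_1$ is close to $x_2$, it lies in the open set $T^{-t}E$, and since $T^{b_j}a$ is close to $x_1$, we can arrange closeness on a large enough window. These two facts combine to give $a(b_i+b_j+t)=1$ for all $i\neq j$, that is, $b_i+b_j\in A-t$. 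Because $E$ is clopen, the open conditions propagate, and the induction only requires a finite window of coordinates at each step.

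Third, and this is the main step, we produce \Erdos{} progressions. Let $\pi:X\to Z$ be the factor map to the Kronecker factor $(Z,m,R)$, where $R$ is rotation by $\alpha$, and disintegrate $\mu=\int \eta_z\,dm(z)$. After passing to a topological model in which $\pi$ is continuous and $\pi(a)$ is defined, consider the measure
\[
\sigma=\int_Z\int_Z \eta_{z}\times\eta_{z+w}\times \eta_{z+2w}\,dm(z)\,dm(w)
\]
on $X^3$, and its variant whose first coordinate is pinned at $a$. We must show that $\sigma$-almost every triple, or almost every point of the pinned version, is an \Erdos{} progression. The approach is to show that $\sigma$ is invariant under $T\times T\times T$ and under $\mathrm{id}\times T\times T^2$. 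Combined with the fact that conditional measures on the Kronecker factor are "characteristic" for such three-term configurations, this lets us use recurrence in the second direction together with genericity of $a$ to obtain $c_n$ with $(T^{c_n}a,T^{c_n}x_1)\to(x_1,x_2)$. Positivity of $\sigma(X\times E\times T^{-t}E)$ for some $t$ follows from ergodicity, since $\int \eta_{z+w}(E)\,\eta_{z+2w}(T^{-t}E)$ is positive after averaging over $t$.

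I expect the main obstacle to be the pinning at $x_0=a$. The measure $\sigma$ naturally yields progressions starting at $\mu$-typical points, but $a$ itself may be atypical. I would first handle this by proving a continuity and genericity statement for $z\mapsto\eta_z$ along $\pi(T^{n}a)$, which amounts to showing that $a$ is generic for the joining $\int\delta_{\cdot}\times\eta_{\pi(\cdot)+w}$. This would use the fact that the Kronecker factor controls the relevant correlations, via a van der Corput estimate. The shift $t$ in the statement is where the freedom to move off $a$ slightly is absorbed.
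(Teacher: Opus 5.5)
The paper does not prove this statement. It is quoted from \cite{KMRR24b}, and only the special case Theorem~\ref{thm_mrr} is proved here, so I measure your outline against the \Erdos{}-progression strategy it follows.

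Your first step, including the topological model with continuous eigenfunctions, is what Theorem~\ref{thm_correspondence_principle} provides; no replacement of $a$ by $T^ta$ is needed. Your second step is correct: choose $b_k$ among the $c_n$ with $T^{b_k}x_1\in T^{-t}E$ and $T^{b_k}a\in\bigcap_{i<k}T^{-b_i-t}E$, which is an open neighbourhood of $x_1$. This gives $b_i+b_k+t\in A$ for all $i<k$.

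The genuine gap is the third step, which carries the entire content of the theorem. The invariance of $\sigma$ under $T\times T\times T$ and $\mathrm{id}\times T\times T^2$ is true, but it is the structure behind recurrence of $(x_0,T^nx_1,T^{2n}x_2)$, i.e.\ three-term progressions. It says nothing about a single time $c$ carrying the pair $(x_0,x_1)$ close to the \emph{shifted} pair $(x_1,x_2)$, which is what an \Erdos{} progression requires. What matters is not that the Kronecker factor is characteristic for three-term averages. It is that the Kronecker factor carries the $T\times T$-invariant $\sigma$-algebra of $\mu\times\mu$: the ergodic components are $\lambda_s=\int\eta_z\times\eta_{z+s}\,dm(z)$. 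Under $\sigma$, both pairs $(x_0,x_1)$ and $(x_1,x_2)$ are distributed according to the same $\lambda_s$, with $s=\pi(x_1)-\pi(x_0)=\pi(x_2)-\pi(x_1)$. Hence for $\sigma$-a.e.\ triple, $(x_0,x_1)$ is generic for $\lambda_s$ while $(x_1,x_2)\in\mathrm{supp}\,\lambda_s$. This puts $(x_1,x_2)$ in the $\omega$-limit set of $(x_0,x_1)$ under $T\times T$. But it handles only a $\mu$-typical base point, not $a$.

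For the pinned measure $\sigma_a=\int\eta_{\pi(a)+s}\times\eta_{\pi(a)+2s}\,dm(s)$ you need two statements your sketch never formulates:
(i) for $\sigma_a$-a.e.\ $(x_1,x_2)$, the pair $(a,x_1)$ is generic, along a subsequence of $\Phi$, for $\lambda_{\pi(x_1)-\pi(a)}$;
(ii) for $\sigma_a$-a.e.\ $(x_1,x_2)$, one has $(x_1,x_2)\in\mathrm{supp}\,\lambda_{\pi(x_1)-\pi(a)}$.

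Statement (i) comes from a second-moment computation in $L^2(\sigma_a)$. Since $\int\eta_{\pi(a)+s}\,dm(s)=\mu$, the second moment of $\frac{1}{|\Phi_N|}\sum_{n\in\Phi_N}f_0(T^na)f_1(T^nx_1)$ involves only the correlations $\langle T^{n-m}f_1,f_1\rangle$. So weak mixing components of $f_1$ contribute nothing. For a topological eigenfunction $f_1$, the average equals $\frac{f_1(x_1)}{f_1(a)}\cdot\frac{1}{|\Phi_N|}\sum_{n\in\Phi_N}(f_0f_1)(T^na)$ for \emph{every} $x_1$, and it converges by genericity of $a$. This is where continuity of eigenfunctions is used. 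Continuity of $z\mapsto\eta_z$, which fails in general, is neither available nor needed, and no van der Corput estimate enters.

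Statement (ii) holds for $m$-a.e.\ base point by the unpinned argument, and it transfers to $\pi(a)$ by continuity. Set $\sigma_u=\int\eta_{u+s}\times\eta_{u+2s}\,dm(s)$ and fix open $U,V$. The quantity $\sigma_u\bigl(\{(x_1,x_2)\in U\times V:\lambda_{\pi(x_1)-u}(U\times V)=0\}\bigr)$ is continuous in $u\in Z$ and vanishes a.e., hence vanishes identically.

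Without (i) and (ii), ``recurrence in the second direction together with genericity of $a$'' has no content. Finally, the positivity you need concerns the pinned measure, $\sigma_a(X\times T^{-t}E)>0$, not $\sigma(X\times E\times T^{-t}E)>0$. It holds for some $t\in\{0,1\}$, since $2Z$ has index at most $2$ in $Z$.
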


Note that Theorem~\ref{thm_kmrr} contains Theorem~\ref{thm_mrr} as a special case. Theorem~\ref{thm_kmrr} was proved in~\cite{KMRR24b}.

The purpose of this note is to provide a new and relatively short proof of Theorem~\ref{thm_mrr}.
The ideas behind this proof are based on recent advances made on Theorem~\ref{thm_kmrr} and its generalizations  in~\cite{KMRR25,KMRR26}.

\paragraph{Acknowledgements}
BK was partially supported by NSF grant DMS-2348315, JM was supported by EPSRC Frontier Research Guarantee grant EP/Y014030/1, and FKR was supported by the Swiss National Science Foundation grant TMSGI2-211214.

\section{Preliminaries from ergodic theory}
\label{sec_prelims}

Throughout, by a \emph{measure-preserving system} we mean a tuple $(X,\mathscr{B}_X,\mu,T)$ where $X$ is a compact metric space, $\mathscr{B}_X$ is the Borel $\sigma$-algebra on $X$, $T\colon X\to X$ is a homeomorphism, and $\mu\colon \mathscr{B}_X\to[0,1]$ is a Borel probability measure on $X$ invariant under the transformation $T$.

Given a \Folner{} sequence $\Phi=(\Phi_N)_{N\in\N}$, a point $x\in X$ is  \emph{generic for $\mu$ along $\Phi$} if
\begin{equation}\label{eq_generic}
\lim_{N\to\infty} \frac{1}{|\Phi_N|}\sum_{n\in\Phi_N} f(T^n x)=\int_X f\d\mu \qquad\text{ for all } f\in C(X),
\end{equation}
where $C(X)$ denotes the space of all (complex-valued) continuous functions on $X$.
\begin{Remark}
\label{rem_generic_points_along_Folner}
When the measure $\mu$ is ergodic, there exists a subsequence $\Psi$ of $\Phi$ along which almost every point is generic. Indeed, the mean ergodic theorem implies that \eqref{eq_generic} holds in $L^2$ norm for every F\o lner sequence and every $f\in C(X)$. 
As norm convergence implies almost sure convergence along a subsequence, and the space $C(X)$ has a countable dense set, 
a diagonal argument produces the desired subsequence $\Psi$.
\end{Remark}

A nonzero function $f\in L^2(X,\mathscr{B}_X,\mu)$ is a \emph{measurable eigenfunction} with \emph{eigenvalue} $\theta\in\C$ if $f(Tx)= \theta f(x)$ holds for $\mu$-almost every $x\in X$. A nonzero function $f\in C(X)$ is a \emph{topological eigenfunction} if $f(Tx)=\theta f(x)$ for every $x\in X$.
We emphasize that every topological eigenfunction is a measurable one, but not necessarily the other way around.

It is well known that the closed subspace of $L^2(X,\mathscr{B}_X,\mu)$ spanned by measurable eigenfunctions is of the form $L^2(X,\mathscr{K},\mu)$ for a sub-$\sigma$-algebra $\mathscr{K}$ of $\mathscr{B}_X$ (see~\cite[Theorem~6.10 and Theorem~C.11]{EW11}). 
Write $\mathbb{E}(f\mid\mathscr{K})$ for the orthogonal projection in $L^2(X,\mathscr{B}_X,\mu)$ of $f$ on $L^2(X,\mathscr{K},\mu)$.
If $f \ge 0$ then $\mathbb{E}(f\mid\mathscr{K}) \ge 0$.

\begin{Remark}
\label{rem_support_of_conditional_expectation}
Given a measurable set $E\subset X$, let $D=\{x\in X: \mathbb{E}(\1_E\mid\mathscr{K})(x)>0\}$ denote the support of $\mathbb{E}(\1_E\mid\mathscr{K})$. 
Since $X\setminus D\in\mathscr{K}$ and $\mathbb{E}(\1_E\mid\mathscr{K}) \ge 0$, it follows that
\[
0=\int_{X\setminus D}\mathbb{E}(\1_E\mid\mathscr{K})\d\mu=\mu(E\setminus D).
\]
We conclude that for $\mu$-almost every $x\in E$ we have $\mathbb{E}(\1_E\mid\mathscr{K})(x)>0$. We use this property in the proof of our main result.
\end{Remark}

We call $f\in L^2(X,\mathscr{B}_X,\mu)$ a \emph{weak mixing} function if for all $g\in L^2(X,\mathscr{B}_X,\mu)$ and all \Folner{} sequences $\Phi=(\Phi_N)_{N\in\N}$ we have
\begin{equation}\label{eq_wmdef}
    \lim_{N\to\infty} \frac{1}{|\Phi_N|}\sum_{n\in\Phi_N} |\langle T^nf ,g\rangle|=0.
\end{equation}

\begin{Remark}\label{remark_wm}
    Note that the limit in \eqref{eq_wmdef} is clearly zero whenever $g$ is orthogonal to $\{ T^n f : n \in \N \}$. This implies that a function is weak mixing if and only if
\[
\lim_{N\to\infty} \frac{1}{|\Phi_N|}\sum_{n\in\Phi_N} |\langle T^nf ,f\rangle|=0.
\]
\end{Remark}

The following fundamental result is frequently used and is implicit in the work of Koopman and von  Neumann~\cite{Koopman_vonNeumann32} 
(cf.\ \cite[Theorem~2.3.4]{Krengel85} or~\cite[Chapter 4,~Proposition~19]{HK18}).

\begin{Theorem}
\label{thm_JdLG_decomposition}
For any $f\in L^2(X,\mathscr{B}_X,\mu)$ we have
\[
f~\text{is weak mixing}~~\iff~~ \mathbb{E}(f\mid\mathscr{K})=0.
\]
\end{Theorem}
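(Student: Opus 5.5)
We prove the two implications separately. The tool is the spectral theorem: for $f\in L^2(X,\mathscr{B}_X,\mu)$ there is a finite positive Borel measure $\sigma_f$ on the unit circle $S^1$ with $\langle T^n f,f\rangle=\int_{S^1} z^n\d\sigma_f(z)$ for all $n\in\N$. It is standard that $f\perp L^2(X,\mathscr{K},\mu)$ exactly when $\sigma_f$ has no atoms. Indeed, an atom of $\sigma_f$ at $\theta$ corresponds to a nonzero projection of $f$ onto the eigenspace $\{g: g\circ T=\theta g\}$. Conversely, $f$ has nonzero inner product with some eigenfunction exactly when $f$ has a nonzero projection onto one of the (mutually orthogonal) eigenspaces, and these span $L^2(X,\mathscr{K},\mu)$.

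($\Rightarrow$) Suppose $f$ is weak mixing and let $g$ be a measurable eigenfunction with eigenvalue $\theta$. Since $\mu$ is $T$-invariant, $|\theta|=1$. Then $\langle T^n f,g\rangle=\langle f,T^{-n}g\rangle=\theta^{n}\langle f,g\rangle$, up to conjugation conventions, so $|\langle T^nf,g\rangle|=|\langle f,g\rangle|$ for every $n$. Averaging over any \Folner{} sequence and using \eqref{eq_wmdef} gives $\langle f,g\rangle=0$. Hence $f$ is orthogonal to every eigenfunction, and therefore to their closed span $L^2(X,\mathscr{K},\mu)$. This means $\mathbb{E}(f\mid\mathscr{K})=0$.

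($\Leftarrow$) Suppose $\mathbb{E}(f\mid\mathscr{K})=0$, so $\sigma_f$ has no atoms. By Remark~\ref{remark_wm} it suffices to show
\[
\lim_{N\to\infty}\frac{1}{|\Phi_N|}\sum_{n\in\Phi_N}|\langle T^nf,f\rangle|=0 .
\]
By Cauchy--Schwarz it is enough to show that the average of $|\langle T^nf,f\rangle|^2$ tends to $0$. Expanding,
\[
|\langle T^nf,f\rangle|^2=\int_{S^1}\int_{S^1}(z\bar w)^n\d\sigma_f(z)\d\sigma_f(w).
\]
Averaging over $n\in\Phi_N$, the integrand $\frac{1}{|\Phi_N|}\sum_{n\in\Phi_N}(z\bar w)^n$ is bounded by $1$. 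Away from the diagonal it tends to $0$ pointwise: when $\lambda=z\bar w\neq1$, write $\lambda\cdot\frac1{|\Phi_N|}\sum_{n\in\Phi_N}\lambda^n=\frac1{|\Phi_N|}\sum_{n\in\Phi_N+1}\lambda^n$. The \Folner{} property makes the difference of the two sums $o(1)$, so $(\lambda-1)\cdot\text{avg}\to0$. By dominated convergence the averages therefore converge to $(\sigma_f\times\sigma_f)(\{z=w\})=\sum_{z}\sigma_f(\{z\})^2=0$, since $\sigma_f$ has no atoms.

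The main obstacle is the identification of $L^2(X,\mathscr{K},\mu)^\perp$ with the functions whose spectral measure is continuous. This requires the standard fact that the spectral projection of the unitary $T$ onto $\{\theta\}$ is the projection onto the $\theta$-eigenspace. We would take it from the spectral theorem for unitary operators; everything else is the elementary computation above.
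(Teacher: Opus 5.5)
Your proof is correct. The paper does not prove this statement itself. It quotes it as the classical Koopman--von Neumann theorem, and its label and the citation of Krengel point to the Jacobs--de Leeuw--Glicksberg splitting.

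Your route is the spectral one:
\begin{enumerate}
\item The forward direction uses $|\langle T^nf,g\rangle|=|\langle f,g\rangle|$ for an eigenfunction $g$.
\item The converse is a \Folner{} version of Wiener's lemma, $\frac{1}{|\Phi_N|}\sum_{n\in\Phi_N}|\langle T^nf,f\rangle|^2\to\sum_\theta\sigma_f(\{\theta\})^2$, followed by Cauchy--Schwarz and the reduction to $g=f$ from Remark~\ref{remark_wm}.
\end{enumerate}

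The JdLG route is operator-theoretic. It uses compactness of the weak-operator closure of $\{U_T^n:n\in\N\}$ to split $L^2$ abstractly into the closed span of unimodular eigenvectors and a complementary ``flight'' part. It needs no spectral measures and works for general contractions and weakly almost periodic semigroups. However, one still has to pass from that abstract description to the averaged condition~\eqref{eq_wmdef} along arbitrary \Folner{} sequences.

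Your argument needs the Koopman operator to be unitary, which holds here since $T$ is invertible. In return it is short, gives the exact value of the limit, and handles every \Folner{} sequence at once through dominated convergence.

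The step you flagged as the main obstacle is elementary, and only one direction of it is needed. The mean ergodic theorem for the unitary operator $\bar\theta U_T$ gives $\frac1N\sum_{n=1}^N\bar\theta^{\,n}T^nf\to P_\theta f$, the projection onto the $\theta$-eigenspace. Pairing with $f$ and applying dominated convergence to $\int\frac1N\sum_{n=1}^N(\bar\theta z)^n\d\sigma_f(z)$ yields $\|P_\theta f\|^2=\sigma_f(\{\theta\})$. Hence $f\perp L^2(X,\mathscr{K},\mu)$ forces $\sigma_f$ to have no atoms, without invoking the projection-valued form of the spectral theorem.
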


A crucial tool for translating a combinatorial problem into ergodic theory is Furstenberg's correspondence principle, introduced in his proof~\cite{Furstenberg77} of \Szemeredi{}'s theorem (see also~\cite{Furstenberg81a}). We utilize the following enhanced variant, which allows us to assume that all eigenfunctions are topological eigenfunctions.

\begin{Theorem}[Correspondence principle]
\label{thm_correspondence_principle}
For any set $A\subset \N$ with positive upper Banach density there exist a \Folner{} sequence $\Phi=(\Phi_N)_{N\in\N}$, an ergodic measure-preserving system $(X,\mu,T)$, a clopen set $E\subset X$, and a point $x\in X$ such that:
\begin{enumerate}	
\item
The point $x$ is generic for $\mu$ along $\Phi$.
\item
The measure $\mu(E)>0$.
\item
$A=\{n\in\N: T^n x\in E\}$.
\item Every measurable eigenfunction is equal $\mu$-almost everywhere to a topological eigenfunction.
\end{enumerate}
\end{Theorem}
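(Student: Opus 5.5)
The plan is to start from the standard Furstenberg correspondence and then modify the system so that its Kronecker factor becomes topological. First I will apply the usual correspondence principle. Take a \Folner{} sequence $\Phi$ along which $|A\cap\Phi_N|/|\Phi_N|$ converges to a positive limit. Let $Y=\{0,1\}^{\N\cup\{0\}}$ (or $\{0,1\}^{\mathbb{Z}}$, so that the shift $S$ is a homeomorphism), put $y=\1_A$, and let $F=\{z: z(0)=1\}$, which is clopen. Passing to a subsequence of $\Phi$, the empirical measures $\frac{1}{|\Phi_N|}\sum_{n\in\Phi_N}\delta_{S^n y}$ converge weak$^*$ to an $S$-invariant measure $\nu$. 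This makes $y$ generic along $\Phi$, gives $\nu(F)>0$, and gives $A=\{n: S^ny\in F\}$.

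To get ergodicity, I will use ergodic decomposition to pick an ergodic component $\nu'$ with $\nu'(F)>0$. The difficulty is that $y$ need not be generic for $\nu'$. The standard fix is to replace $y$ by a point generic for $\nu'$ and replace $A$ by a set $A'$ whose finite patterns all occur in $A$. For this statement that would not suffice, since conclusion 3 asks for $A$ itself. I will therefore use the known fact that one can choose a new \Folner{} sequence along which $y$ is generic for an ergodic measure. The argument goes as follows: a $\nu'$-generic point $z$ in the orbit closure of $y$ has the property that its blocks $z|_{[M,M+L]}$ appear in $y$. Concatenating long intervals $\Phi'_N$ where $S^n y$ shadows $S^n z$ gives a \Folner{} sequence of intervals along which $y$ is generic for $\nu'$. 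This step needs $\nu'$-almost every point to lie in the orbit closure of $y$ with its finite windows realized in $y$, which holds because $\nu'$ is supported on the orbit closure.

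For property 4, I will pass to a joining with the Kronecker factor. Let $(Z,m,R)$ be the Kronecker factor of $(Y,\nu',S)$, realized as a rotation on a compact abelian group, with factor map $\pi\colon Y\to Z$ (measurable). Set $X=Y\times Z$, $T=S\times R$, and $\mu=(\mathrm{id}\times\pi)_*\nu'$, which is an ergodic graph joining isomorphic to $(Y,\nu',S)$. Put $E=F\times Z$, which is clopen with $\mu(E)>0$. The measurable eigenfunctions of $(X,\mu,T)$ are a.e.\ equal to characters of $Z$ composed with the projection $X\to Z$, and those are continuous topological eigenfunctions. The base point must be $x=(y,z_0)$ for a suitable $z_0\in Z$. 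Conclusion 3 holds automatically because $E$ only depends on the first coordinate.

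The main obstacle is conclusion 1 for this product point: $(y,z_0)$ has to be generic for the graph measure $\mu$ along some \Folner{} sequence. My approach is to show that $\lim_N \frac{1}{|\Phi_N|}\sum_{n\in\Phi_N} f(S^ny)\overline{\chi(R^nz_0)}$ equals $\int f\cdot\overline{\chi\circ\pi}\d\nu'$ for continuous $f$ and characters $\chi$, for some choice of $z_0$. Equivalently, along $\Phi$ the point $y$ must exhibit the correct spectral correlations $\lim \frac{1}{|\Phi_N|}\sum f(S^ny)\overline{\theta}^n$. I will first pass to a subsequence of $\Phi$ along which all these countably many averages converge, which is possible because the eigenvalues are countable. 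This produces a joining $\lambda$ of $\nu'$ with $m$ for which $(y,z_0)$ is generic. I will then choose $z_0$ so that $\lambda$ is the graph joining. This requires the limiting coefficients to equal $\int f\,\overline{\chi\circ\pi}\d\nu'$ times the appropriate phase $\chi(z_0)$. If that cannot be arranged, I will instead rebuild the \Folner{} sequence in the previous step using a $\mu$-generic point $(z,w)$ of the joining, shadowing both coordinates simultaneously with the same concatenation trick, since $R^n z_0$ is controlled by choosing the offsets of the blocks.
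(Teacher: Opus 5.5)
Your overall architecture is the right one and parallels what the paper cites from \cite{KMRR24a}. It has three parts: the symbolic correspondence, passage to an ergodic component $\nu'$ along a new \Folner{} sequence of windows (your Step~2 is fine), and the graph joining $\mu=(\mathrm{id}\times\pi)_*\nu'$ on $Y\times Z$, which is ergodic and gives conclusions 2--4. The gap is conclusion 1, which you flag but do not resolve.

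\emph{Your first route can fail for every $z_0$.} Any limit joining $\lambda$ of $\nu'$ with $(Z,m,R)$ has ergodic components of the form $(\mathrm{id}\times\tau_v)_*\mu$, where $\tau_v$ is translation by $v$. Indeed, on an ergodic joining the invariant function $(\chi\circ\pi)\otimes\overline{\chi}$ is a unimodular constant for each character $\chi$. Hence $\lambda=\int(\mathrm{id}\times\tau_v)_*\mu\d\rho(v)$, and replacing $z_0$ by $z_0+w$ merely translates $\rho$. If $\rho$ is not a point mass, no $z_0$ works. For example, let $y$ alternate $0101\ldots$ on longer and longer blocks whose phase flips from one block to the next, and let $\Phi_N$ be the union of two equally long intervals inside blocks of opposite phase. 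Then $Z=\mathbb{Z}/2$ and $\lambda=\frac12\big(\mu+(\mathrm{id}\times\tau_1)_*\mu\big)$ for every $z_0$.

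\emph{Your fallback has the right shape, but its justification is wrong.} The times $n$ at which $S^ny$ shadows $z$ are dictated by $y$. Shifting the offset inside a block moves both coordinates along the same $T$-orbit, so it does not change the discrepancy between $R^nz_0$ and the Kronecker phase of the point being shadowed. The ``same concatenation trick'' requires $(z,w)$ to lie in the forward orbit closure of $(y,z_0)$, and for a given $z_0$ this can be false: if $A$ is the set of odd numbers, exactly one $z_0\in\mathbb{Z}/2$ works.

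\emph{The missing idea is to choose $z_0$ only at the end, using compactness of $Z$.} In Step~2, pick $z\in\overline{\{S^ny:n\in\N\}}$ such that $(z,\pi(z))$ is generic for $\mu$ along $\{0,\dots,L\}$; this holds for $\nu'$-a.e.\ $z$. Pick $n_L\in\N$ with $y(n_L+j)=z(j)$ for $|j|\le L$. Writing $R(t)=t+\alpha$, pass to a subsequence of $L$'s along which $n_L\alpha\to u$ in $Z$, and set $z_0=\pi(z)-u$. Fix a translation-invariant metric $d_Z$ on $Z$. Then $R^{n_L+k}z_0$ is within $d_Z(n_L\alpha,u)$ of $R^k\pi(z)$ for every $k$. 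Also, for $0\le k\le L/2$, $S^{n_L+k}y$ agrees with $S^kz$ on the coordinates $[-L,L/2]$. By uniform continuity, $(y,z_0)$ is therefore generic for $\mu$ along the intervals $\{n_L,\dots,n_L+\lfloor L/2\rfloor\}$.

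Alternatively, apply your Step~2 lemma to $(y,z_0)$ and an ergodic component of $\lambda$, then translate $z_0$. Either way, an argument of this kind is needed, and it is absent from the proposal.
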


Theorem~\ref{thm_correspondence_principle} is implicit in~\cite{KMRR24a} as a combination of~\cite[Theorem 2.10]{KMRR24a} and~\cite[Lemma 5.8]{KMRR24a}. The former establishes the first three properties as in Furstenberg's original correspondence principle.
The latter is then used to produce a topological  model of that system that additionally satisfies the fourth property.

\section{Reformulation from combinatorics to ergodic theory}

Using Furstenberg's correspondence principle, we reduce the existence of sumsets to the following dynamical result, versions of which have appeared in all of~\cite{KMRR24a, KMRR24b, KMRR25, KMRR26}.

\begin{Theorem}[Main dynamical result]
\label{thm_main_dynamical}
Let $(X,\mu,T)$ be an ergodic measure-preserving system and let $x\in X$ be a point that is generic for $\mu$ along a \Folner{} sequence $\Phi=(\Phi_N)_{N\in\N}$. 
Assume every measurable eigenfunction of $(X,\mu,T)$ is equal $\mu$-almost everywhere to a topological eigenfunction. Then for any clopen set $E\subset X$ with $\mu(E)>0$ there exist sequences $b_1<b_2<\ldots\in\N$ and $c_1<c_2<\ldots\in\N$ such that
\begin{equation}
\label{eqn_fuzzy_Erdos_square}
\lim_{j\to\infty}\lim_{i\to\infty} T^{b_i+c_j}x \in E\qquad\text{and}\qquad \lim_{i\to\infty}\lim_{j\to\infty} T^{b_i+c_j}x \in E.
\end{equation}
\end{Theorem}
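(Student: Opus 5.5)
We plan to construct the limit points explicitly: first find a point $x_1\in E$ with $T^{c_j}x\to x_1$, and then choose the $b_i$ so that $T^{b_i}x_1\to$ (a point of $E$) and $T^{b_i}x\to x_2$ with $x_2$ suitably related. The eigenfunction hypothesis will control the rotation (Kronecker) part. Weak mixing will take care of the rest.

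\emph{Step 1 (choosing a good point).} Let $\mathscr{K}$ be the Kronecker factor. Since $\mu$ is ergodic, Remark~\ref{rem_generic_points_along_Folner} gives a subsequence $\Psi$ of $\Phi$ along which $\mu$-almost every point is generic. By Remark~\ref{rem_support_of_conditional_expectation}, almost every $a\in E$ satisfies $\mathbb{E}(\1_E\mid\mathscr{K})(a)>0$. We want to pick such an $a$ which is also generic along $\Psi$ and lies in the orbit closure of $x$ in a way compatible with the Kronecker coordinates. By hypothesis~4 the eigenfunctions $\phi_k$ (countably many, up to a.e.\ equality) are continuous, so the map $\pi=(\phi_k)_k\colon X\to Z$ onto a compact group rotation is continuous. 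Genericity of $x$ along $\Phi$ shows that $\mu$ is carried by the orbit closure of $x$. Hence for $\mu$-a.e.\ $a$ there are $c_j\to\infty$ with $T^{c_j}x\to a$, and these can be chosen inside $\bigcup\Phi_N$.

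\emph{Step 2 (the key estimate).} We aim to show that the set
\[
S=\{n: T^n x\in E \text{ and } T^n a\in E\}
\]
has positive upper density along $\Psi$; note $T^na\in E$ says that $n$ is in $A$ shifted along the limit point. Since $E$ is clopen, $\1_E$ is continuous. Writing $\1_E=\mathbb{E}(\1_E\mid\mathscr{K})+f_{wm}$, we use that $x$ is generic along $\Phi$ and $a$ along $\Psi$. The plan is to evaluate $\frac{1}{|\Psi_N|}\sum_{n\in\Psi_N}\1_E(T^nx)\1_E(T^na)$ via joint genericity of $(x,a)$ for some joining $\lambda$ of $\mu$ with itself (pass to a further subsequence). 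The weak mixing component contributes $0$ against anything measurable with respect to the Kronecker factor in the other coordinate, using Theorem~\ref{thm_JdLG_decomposition}. The joining restricted to the Kronecker factors is determined by the rotation: since $\pi$ is continuous and $T^{c_j}x\to a$, the Kronecker coordinates of $x$ and $a$ are related by a fixed translation. This leaves $\int \mathbb{E}(\1_E\mid\mathscr{K})\cdot \mathbb{E}(\1_E\mid\mathscr{K})\circ R\d\mu$ for the appropriate shift $R$. This should be positive because $\mathbb{E}(\1_E\mid\mathscr{K})(a)>0$ and the shift is essentially the identity at $a$.

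\emph{Main obstacle.} Handling the weak mixing part in the product is the hard step, since the $\lambda$-correlation of $f_{wm}\otimes g$ need not vanish for an arbitrary joining. I would first try averaging over the $c_j$ as well, so that orthogonality follows from Remark~\ref{remark_wm}. Failing that, I would replace $a$ by a point chosen via a second application of Remark~\ref{rem_support_of_conditional_expectation} to a product system.

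\emph{Step 3 (building the sequences).} Once $S$ has positive density, we construct the sequences inductively. Choose $b_i\in S$ growing such that $T^{b_i}x\to x_2\in E$ (by compactness and clopenness of $E$) and $T^{b_i}a\in E$. Then refine $c_j$ so that $T^{b_i+c_j}x\to T^{b_i}a\in E$ for each fixed $i$, which holds by continuity of $T^{b_i}$. Consequently $\lim_i\lim_j T^{b_i+c_j}x=\lim_i T^{b_i}a$. This limit lies in $E$ after passing to a subsequence, using $E$ clopen. The other order, $\lim_j T^{c_j}x_2$, requires $x_2$ to also have $a$-like behaviour. For this we would choose the $b_i$ so that $x_2=a$, i.e.\ $T^{b_i}x\to a$ as well, which Step 2 should allow with minor modification.
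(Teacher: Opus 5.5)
There is a genuine gap, and it already appears in the simplest example: $X=\mathbb{Z}/2$, $Tz=z+1$, $x=0$, $E=\{1\}$, so $A$ is the set of odd numbers.

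\textbf{Step~2 fails.} Step~1 forces $a=1$. Then $S=\{n: n \text{ and } n+1 \text{ both odd}\}=\emptyset$, so the key estimate is false. The heuristic breaks because, on the Kronecker factor, the compact contribution to $\lambda(E\times E)$ is $\int f_c\cdot(f_c\circ R_{z_0})$. Here $z_0=\pi(a)-\pi(x)$ is the Kronecker displacement between $x$ and $a$. This translation is arbitrary and unrelated to $f_c(a)>0$, and the integral can vanish.

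\textbf{Step~3 fails.} If $T^{b_i}x$ and $T^{c_j}x$ converge to the same point $a$, then in this example $b_i\equiv c_j \pmod 2$ eventually, so every $b_i+c_j$ is even. In general this plan would produce $B+B\subset A$, which is false without the shift $t$ of Theorem~\ref{thm_kmrr}. If you drop $x_2=a$, the second iterated limit is $\lim_j T^{c_j}x_2$. The condition $T^nx\in E$ built into $S$ says nothing about it.

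\textbf{The weak-mixing obstacle is unresolved.} You flag it correctly: for a single correlation $\lambda(E\times E)$ at a fixed pair of points, the weak-mixing contribution need not vanish. Your proposal does not say how to get around this.

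\textbf{What is missing.} You need to separate the roles of the two sequences and to average over the shift.
\begin{enumerate}
\item Take $y$ generic and in the support of $\mu$, with no requirement that $y\in E$. Let $\lambda$ be a joining for which $(x,y)$ is generic.
\item Build $b_1<b_2<\cdots$ inductively with $T^{b_i}x\to y$ and $\lambda\big((T^{-b_1}E\cap\cdots\cap T^{-b_j}E)\times E\big)>0$ for all $j$.
\item Only then choose $c_j$ with $T^{c_j}y\in E$ and $T^{c_j+b_i}x\in E$ for all $i\le j$. This controls both iterated limits at once.
\end{enumerate}

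The inductive step is the paper's Claim, and it resolves exactly your two difficulties.
\begin{itemize}
\item The ``identity shift'' comes from the choice of $b$, not from a property of a point. For $b$ in $\mathcal{B}=\{b: d(T^bx,y)<\delta\}$, continuity of the eigenfunctions plus the eigenvalue equation makes $\|T^bf_c\otimes 1-1\otimes f_c\|_{L^2(\lambda)}$ uniformly small.
\item Positivity of $f_c$ on $E$ is used only in measure, through $\langle 1\otimes f_c,\1_Y\rangle>0$, not at a chosen point.
\item The weak-mixing part is killed by averaging in $b$. The function $f_{\mathrm{wm}}\otimes 1$ has the same self-correlations under $\lambda$ as $f_{\mathrm{wm}}$ under $\mu$, so it is weak mixing in $L^2(\lambda)$ by Remark~\ref{remark_wm}. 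Meanwhile $\mathcal{B}$ has positive upper density along $\Phi$, so it contains infinitely many $b$ where the weak-mixing term is small.
\end{itemize}
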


\begin{proof}[Proof that Theorem~\ref{thm_main_dynamical} implies Theorem~\ref{thm_mrr}]
Assume that $A\subset\N$  has positive upper Banach density and let $\Phi$ be the \Folner{} sequence, $(X, \mu, T)$ the ergodic measure-preserving system, $E\subset X$ clopen, and $x\in X$ the point given by  Theorem~\ref{thm_correspondence_principle}.  
We inductively apply the limits to construct subsequences $(b_i')_{i\in\N}$ of $(b_i)_{i\in\N}$ and $(c_j')_{j\in\N}$ of $(c_j)_{j\in\N}$ such that  
$b_i'+c_j'\in A$ for all $i,j\in\N$.
First use the second iterated limit in~\eqref{eqn_fuzzy_Erdos_square} to choose
$b_1'\in (b_i)_{i\in\N}$ such that 
\begin{equation}
\label{eqn_choosing_first_b}
\lim_{j\to\infty}T^{b_1'+c_j}x\in E.
\end{equation}
Then use both~\eqref{eqn_choosing_first_b} and the first iterated limit in~\eqref{eqn_fuzzy_Erdos_square} to choose $c_1'\in (c_j)_{j\in\N}$ such that 
\[T^{b_1'+c_1'}x\in E \quad \text{ and } \quad \lim_{i\to\infty}T^{b_i+c_1'}x\in E.\]
We next choose $b_2'\in (b_i)_{i\in\N}$ sufficiently large such that 
\[T^{b_2'+c_1'}x\in E \quad \text{ and } \quad \lim_{j\to\infty}T^{b_2'+c_j}x\in E,\]
and then choose $c_2'\in (c_j)_{j\in\N}$ sufficiently large such that 
\[T^{b_1'+c_2'}x, T^{b_2'+c_2'}x\in E \quad \text{ and } \quad \lim_{i\to\infty}T^{b_i+c_2'}x\in E.\]
We then continue inductively to obtain sequences $(b_i')_{i\in\N}$ and $(c_j')_{j\in\N}$ with $T^{b_i'+c_j'}x\in E$ for all $i, j \in \N$.
Applying Theorem~\ref{thm_correspondence_principle} to the set $A$ it is immediate from property (iii) that $A$ contains the infinite sumset $\{ b_i'+c_j' : i,j \in \N \}$.
\end{proof}

\section{Proof of main dynamical result}

\begin{proof}[Proof of Theorem~\ref{thm_main_dynamical}]
After replacing the \Folner{} sequence $\Phi$ by a subsequence if necessary, there exists a point $y$ in the support of $\mu$ that is generic for $\mu$ along $\Phi$ (cf.~Remark~\ref{rem_generic_points_along_Folner}).
Further refining $\Phi$ if necessary, we can also assume that the pair $(x,y)$ is generic along $\Phi$ with respect to the product transformation $T\times T$ for a measure $\lambda$ on the product space $X\times X$.
Note that for any $f \in C(X)$, 
\[
\int_{X\times X} f(x_1) \d \lambda(x_1,x_2) =
\int_{X\times X} f(x_2) \d \lambda(x_1,x_2) = 
\int_X f \d \mu.
\]

Let $d\colon X\times X\to [0,\infty)$ denote a metric on $X$.
The central ingredient of the proof is the following claim.

\begin{claim}
\label{claim_1}
Let $Y\subset X\times X$ be a measurable set. If $\lambda\big((X\times E)\cap Y\big)>0$, then for every $\epsilon>0$ there exist infinitely many $b\in\N$ such that
\begin{equation}
\label{eqn_transposed_recurrence}
d(T^bx,y)< \epsilon\qquad\text{and}\qquad
\lambda\big((T^{-b}E\times X)\cap Y\big)>0. 
\end{equation}
\end{claim}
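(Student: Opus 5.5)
The plan is to average the function $F(b)=\lambda\big((T^{-b}E\times X)\cap Y\big)=\int \1_E(T^bx_1)\1_Y(x_1,x_2)\d\lambda$ over the set of return times $R_{\epsilon'}=\{b: d(T^bx,y)<\epsilon'\}$, with $\epsilon'\leq\epsilon$ to be chosen. I will show this average is positive along $\Phi$; then $F(b)>0$ for a set of $b\in R_{\epsilon'}\subset R_\epsilon$ of positive upper density, which gives infinitely many $b$.

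\textbf{Step 1: the return times have positive density.} Since $y\in\operatorname{supp}\mu$ and $x$ is generic along $\Phi$, testing against a continuous bump function supported in the ball $B(y,\epsilon')$ shows that $R_{\epsilon'}$ has positive lower density along $\Phi$. In particular, $\liminf_N |R_{\epsilon'}\cap\Phi_N|/|\Phi_N|>0$.

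\textbf{Step 2: split $\1_E$.} Write $\1_E=k+h$ with $k=\mathbb{E}(\1_E\mid\mathscr{K})$. By Theorem~\ref{thm_JdLG_decomposition}, $h$ is weak mixing.

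\textbf{Step 3: the weak mixing part averages to zero.} The measure $\lambda$ is $T\times T$-invariant, being a limit along a \Folner{} sequence, and its first marginal is $\mu$. Hence $\langle (T\times T)^b(h\otimes 1),h\otimes1\rangle_\lambda=\langle T^bh,h\rangle_\mu$, and by Remark~\ref{remark_wm} the function $h\otimes 1$ is weak mixing in $L^2(\lambda)$. Therefore
\[
\frac{1}{|\Phi_N|}\sum_{b\in\Phi_N}\1_{R_{\epsilon'}}(b)\Big|\int h(T^bx_1)\1_Y\d\lambda\Big|\to 0 .
\]

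\textbf{Step 4: the structured part is essentially constant on $R_{\epsilon'}$.} Fix $\delta>0$ and approximate $k$ in $L^2(\mu)$ within $\delta$ by $k'=\sum_{i} c_i\varphi_i$. By hypothesis each $\varphi_i$ can be taken to be a topological eigenfunction with eigenvalue $\theta_i$. Each $|\varphi_i|$ is continuous and $T$-invariant, hence constant on the orbit closure of $x$. This orbit closure contains $\operatorname{supp}\mu$, so $|\varphi_i|\equiv c_i'>0$ there and at $x$, $y$.

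Genericity of $(x,y)$ gives
\[
\int\varphi_i(x_1)\overline{\varphi_i(x_2)}\d\lambda=\varphi_i(x)\overline{\varphi_i(y)}.
\]
The integrand has constant modulus $c_i'^2$ and the right-hand side also has modulus $c_i'^2$, so equality forces $\varphi_i(x_2)=\frac{\varphi_i(y)}{\varphi_i(x)}\varphi_i(x_1)$ for $\lambda$-a.e.\ $(x_1,x_2)$.

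On the other hand, for $b\in R_{\epsilon'}$ we have $\theta_i^b\varphi_i(x)=\varphi_i(T^bx)\approx\varphi_i(y)$ by uniform continuity, choosing $\epsilon'$ small in terms of $\delta$ and the finitely many $\varphi_i$. Combining the two facts gives $k'(T^bx_1)\approx k'(x_2)$ uniformly $\lambda$-a.e. Since both marginals of $\lambda$ are $\mu$, it follows that for every $b\in R_{\epsilon'}$,
\[
\Big|\int k(T^bx_1)\1_Y\d\lambda-\int k(x_2)\1_Y\d\lambda\Big|\leq C\delta .
\]

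\textbf{Step 5: the main term is positive.} We have $k\ge0$, and by Remark~\ref{rem_support_of_conditional_expectation} $k>0$ $\mu$-a.e.\ on $E$. Since the second marginal of $\lambda$ is $\mu$,
\[
\int k(x_2)\1_Y\d\lambda\ \ge\ \int_{(X\times E)\cap Y}k(x_2)\d\lambda=:\eta>0,
\]
because $\lambda\big((X\times E)\cap Y\big)>0$.

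\textbf{Conclusion.} Choose $\delta$ with $C\delta<\eta/2$. Combining Steps 1--5, the average of $F$ over $R_{\epsilon'}\cap\Phi_N$ is eventually at least $\eta/3$. Hence $F(b)>0$ for infinitely many $b\in R_{\epsilon'}\subset R_\epsilon$, which proves~\eqref{eqn_transposed_recurrence}.

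The main obstacle is Step 4: identifying the Kronecker part of $\lambda$ through the a.e.\ identity $\varphi_i(x_2)=\frac{\varphi_i(y)}{\varphi_i(x)}\varphi_i(x_1)$. This is exactly where the topological-eigenfunction hypothesis and the genericity of the pair $(x,y)$ are used.
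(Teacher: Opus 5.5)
Your proof is correct and follows the paper's argument. It uses the same split $\1_E=k+h$ with $k=\mathbb{E}(\1_E\mid\mathscr{K})$, the same positivity input from Remark~\ref{rem_support_of_conditional_expectation}, and the same combination of weak mixing of $h\otimes 1$ in $L^2(\lambda)$ with the positive density of times $b$ at which $T^bx$ is close to $y$. The only difference is in the structured part: you first derive the $\lambda$-a.e.\ identity $\varphi_i(x_2)=\frac{\varphi_i(y)}{\varphi_i(x)}\varphi_i(x_1)$ and then use continuity at $y$, whereas the paper bounds $\|T^bf_{\mathrm{c}}'\otimes 1-1\otimes f_{\mathrm{c}}'\|_{L^2(\lambda)}$ directly from genericity of $(x,y)$ and the equicontinuity estimate~\eqref{eqn_equicontinuity}; both rest on the same facts about topological eigenfunctions.
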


\begin{proof}[Proof of Claim]
\renewcommand{\qedsymbol}{$\triangle$}
Fix $\epsilon > 0$.
We write $f_c = \mathbb{E}(\1_E\mid \mathscr{K})$ for the orthogonal projection of $\1_E$ on the space $L^2(X,\mathscr{K},\mu)$ and $f_{\mathrm{wm}}=\1_E-f_c$, so that  $f_{\mathrm{wm}}$ is weak mixing by Theorem~\ref{thm_JdLG_decomposition}.
As noted in Remark~\ref{rem_support_of_conditional_expectation}, for $\mu$-almost every $x\in E$ we have $f_c(x)>0$, and hence
\[
\lambda\big((X\times E)\cap Y\big)>0~~\implies~~\big\langle (1\otimes f_c),\, \1_Y \big\rangle>0.
\]
Let $\eta=\langle (1\otimes f_c),\, \1_Y \rangle$. 
To prove~\eqref{eqn_transposed_recurrence}, it therefore suffices to find infinitely many $b\in\N$ such that
\[
\begin{aligned}
& \underbrace{d(T^bx,y)< \epsilon}_{[1]},\qquad  \underbrace{\big\|(T^b f_{\mathrm{c}}\otimes 1)\,-\, (1\otimes f_c)\big\|_{L^2(\lambda)}< \frac{\eta}{2}}_{[2]}, 
\\
&\qquad\qquad\text{and}\quad
\underbrace{\big|\big\langle (T^b f_{\mathrm{wm}}\otimes 1),\, \1_Y \big\rangle \big|< \frac{\eta}{2}}_{[3]}.
\end{aligned}
\]
Indeed, we may write
\[
\lambda\big((T^{-b}E\times X)\cap Y\big)
=
\langle T^b f_{\mathrm{c}} \otimes \1, \1_Y \rangle + \langle T^b f_{\mathrm{wm}} \otimes \1, \1_Y \rangle
\ge
\langle f_{\mathrm{c}} \otimes \1, \1_Y \rangle - \eta
\]
whenever $b$ satisfies [2] and [3].
Since $f_{\mathrm{c}}$ can be approximated by eigenfunctions, there exist $r\in\N$ and topological eigenfunctions $\chi_1,\ldots,\chi_r$ such that $f_{\mathrm{c}}^\prime=\chi_1+\ldots+\chi_r$ satisfies $\|f_{\mathrm{c}}^\prime-f_{\mathrm{c}}\|_{L^2}< \eta/6$.
The advantage of working with $f_{\mathrm{c}}^\prime$ over $f_{\mathrm{c}}$ is that $f_{\mathrm{c}}^\prime$ is continuous because each $\chi_i$ is continuous.
Let $\delta>0$ be sufficiently small to ensure that for $1\leq i\leq r$ we have $|\chi_i(y)-\chi_i(z)|<\eta/6r$ whenever $d(z,y)<\delta$.
Since $\chi_i$ is also an eigenfunction, we additionally obtain $\sup_{n\in\N}|\chi_i(T^ny)-\chi_i(T^nz)|<\eta/6r$ whenever $d(z,y)<\delta$. 
This implies the following equicontinuity property for the function $f_{\mathrm{c}}^\prime$: 
\begin{equation}
\label{eqn_equicontinuity}    
\sup_{n\in\N}|f_{\mathrm{c}}^\prime(T^ny)-f_{\mathrm{c}}^\prime(T^nz)|<\frac{\eta}{6}~~\text{whenever}~~d(z,y)<\delta.
\end{equation}

Next, consider the set $\mathcal{B}=\{b\in\N: d(T^b x,y)<\min\{\delta,\epsilon\}\}$.
We make three observations about $\mathcal{B}$. First, note that $[1]$ is satisfied for all $b\in\mathcal{B}$.  Second, it follows from~\eqref{eqn_equicontinuity} that 
\begin{equation}
\label{eqn_recurrence_along_S}   
\sup_{b\in\mathcal{B}}\sup_{n\in\N}|f_{\mathrm{c}}^\prime(T^{n+b}x)-f_{\mathrm{c}}^\prime(T^ny)|\leq\frac{\eta}{6}.
\end{equation}
Third, since $y$ belongs to the topological support of $\mu$ and $x$ is generic for $\mu$ along $\Phi$, it follows that the orbit of $x$ visits every neighborhood of $y$ with positive frequency along the \Folner{} sequence $\Phi$. In particular, we have
\begin{equation}
\label{eqn_density_of_S}   
\limsup_{M\to\infty}\frac{|\mathcal{B} \cap \Phi_M|}{|\Phi_M|} >0.
\end{equation}
Using $\|f_{\mathrm{c}}^\prime-f_{\mathrm{c}}\|_{L^2}< \eta/6$ and the fact that $(x,y)$ is generic for $\lambda$, and then~\eqref{eqn_recurrence_along_S} we get
\begin{align*}
\sup_{b\in\mathcal{B}}
\big\|(T^b f_{\mathrm{c}}\otimes 1)\,-\, & (1\otimes f_c)\big\|_{L^2}
\,<\,
\sup_{b\in\mathcal{B}}
\big\|(T^b f_{\mathrm{c}}^\prime\otimes 1)\,-\, (1\otimes f_c^\prime)\big\|_{L^2} + \frac{\eta}{3}
\\
&=
\sup_{b\in\mathcal{B}}\bigg(\lim_{N\to\infty}
\frac{1}{|\Phi_N|}\sum_{n\in \Phi_N}
\big|f_{\mathrm{c}}^\prime(T^{n+b}x) - f_c^\prime(T^ny)\big|^2 \bigg)^{\frac{1}{2}} + \frac{\eta}{3}
\,\leq\,
\frac{\eta}{2}.
\end{align*}
This proves that $[2]$ holds for all $b\in\mathcal{B}$.

Finally, since $f_{\mathrm{wm}}$ is a weak mixing function in $L^2(X,\mathscr{B}_{X},\mu)$, Remark~\ref{remark_wm} implies $(f_{\mathrm{wm}}\otimes 1)$ is a weak mixing function in $L^2(X\times X,\mathscr{B}_{X\times X},\lambda)$. In particular,
\[
\lim_{M\to\infty} \frac{1}{|\Phi_M|}\sum_{b\in\Phi_M} \big|\big\langle T^b f_{\mathrm{wm}}\otimes 1,\, \1_Y \big\rangle \big|=0.
\]
Combined with~\eqref{eqn_density_of_S}, this proves that there exist infinitely many $b\in \mathcal{B}$ such that $[3]$ holds. This completes the proof of the Claim.
\end{proof}

The proof of Theorem~\ref{thm_main_dynamical} now follows from a simple iteration of the Claim. 
Noting that $\lambda(X\times E)=\mu(E)>0$, we apply the claim with $Y=X\times E$ to find $b_1$ such that
\[
d(T^{b_1}x,y)<\frac{1}{2}\qquad\text{and}\qquad
\lambda\big(T^{-b_1}E\times E\big)>0.
\]
Applying the claim again, this time with $Y=T^{-b_1}E\times E$, we can find $b_2>b_1$ such that
\[
d(T^{b_2}x,y)<\frac{1}{4}\qquad\text{and}\qquad
\lambda\big((T^{-b_1}E\cap T^{-b_2}E)\times E\big)>0.
\]
Next, we apply the claim with $Y=(T^{-b_1}E\cap T^{-b_2}E)\times E$, and so on.
Continuing this procedure produces a sequence $b_1<b_2<\ldots\in\N$ such that $T^{b_i}x \to y$ as $i\to\infty$ and
\begin{equation}
\label{eqn_FIP_joining}
\lambda\big((T^{-b_1}E\cap\ldots\cap T^{-b_j}E)\times E\big)>0,\qquad \text{ for all } j\in \N.
\end{equation}
From the definition of $\lambda$, it follows that
\[
\lambda\big((T^{-b_1}E\cap\ldots\cap T^{-b_i}E)\times E\big)
=
\lim_{N\to\infty} \frac{1}{|\Phi_N|}\sum_{n\in\Phi_N}
\1_{E}(T^{n+b_1}x)\cdots \1_E(T^{n+b_i}x) \1_E(T^ny).
\]
In particular, for every $j\in\N$ we can find $c_j$ such that 
$T^{c_j}y\in E$ and $T^{c_j+b_i}x\in E$ for all $i\leq j$.
After replacing $(c_j)$ by a subsequence such that the limits $\lim_{j\to\infty} T^{c_j}x$ and $\lim_{j\to\infty} T^{c_j}y$ exist, and $(b_i)$ by a subsequence such that $\lim_{i\to\infty}\lim_{j\to\infty} T^{b_i+c_j}x$ exists, we have that~\eqref{eqn_fuzzy_Erdos_square} is satisfied.
\end{proof}

\bibliographystyle{references_style_file_for_arXiv}
\bibliography{references_for_arXiv}

\providecommand{\bysame}{\leavevmode\hbox to3em{\hrulefill}\thinspace}
\providecommand{\noopsort}[1]{}
\providecommand{\zbl}[1]{\href{http://www.zentralblatt-math.org/zmath/en/search/?q=an:#1}{Zbl~#1}}
\providecommand{\jfm}[1]{\href{http://www.emis.de/cgi-bin/JFM-item?#1}{JFM~#1}}
\providecommand{\arxiv}[1]{\href{http://www.arxiv.org/abs/#1}{arXiv~#1}}
\providecommand{\doi}[1]{\url{https://doi.org/#1}}
\providecommand{\href}[2]{#2}
\begin{thebibliography}{KMRR24b}

\bibitem[EW11]{EW11}
\bgroup\scshape{}M.~Einsiedler\egroup{} and \bgroup\scshape{}T.~Ward\egroup{},
  \emph{Ergodic theory with a view towards number theory}, \emph{Graduate Texts
  in Mathematics} \textbf{259}, Springer London, London, 2011.
  \doi{10.1007/978-0-85729-021-2}.

\bibitem[Fur77]{Furstenberg77}
\bgroup\scshape{}H.~Furstenberg\egroup{}, Ergodic behavior of diagonal measures
  and a theorem of {Szemer{\'e}di} on arithmetic progressions,  \emph{Journal
  d'Analyse Math{\'e}matique} \textbf{31} (1977), 204--256.
  \doi{10.1007/BF02813304}.

\bibitem[Fur81]{Furstenberg81a}
\bgroup\scshape{}H.~Furstenberg\egroup{}, \emph{Recurrence in ergodic theory
  and combinatorial number theory}, \emph{M. B. Porter Lectures}, Princeton
  University Press, Princeton, NJ, 1981.

\bibitem[Hos19]{Host19}
\bgroup\scshape{}B.~Host\egroup{}, A short proof of a conjecture of
  {Erd{\H{o}}s} proved by {Moreira}, {Richter} and {Robertson},  \emph{Discrete
  Analysis} (2019), Paper No. 19, 10 pp. \doi{10.19086/da.11129}.

\bibitem[HK18]{HK18}
\bgroup\scshape{}B.~Host\egroup{} and \bgroup\scshape{}B.~Kra\egroup{},
  \emph{Nilpotent structures in ergodic theory}, \emph{Mathematical Surveys and
  Monographs} \textbf{236}, American Mathematical Society, Providence, RI,
  2018. \doi{10.1090/surv/236}.

\bibitem[KvN32]{Koopman_vonNeumann32}
\bgroup\scshape{}B.~O. Koopman\egroup{} and \bgroup\scshape{}J.~von
  Neumann\egroup{}, Dynamical systems of continuous spectra,  \emph{Proceedings
  of the National Academy of Sciences of the United States of America}
  \textbf{18} no.~3 (1932), 255--263. \doi{10.1073/pnas.18.3.255}.

\bibitem[KMRR24a]{KMRR24a}
\bgroup\scshape{}B.~Kra\egroup{}, \bgroup\scshape{}J.~Moreira\egroup{},
  \bgroup\scshape{}F.~K. Richter\egroup{}, and
  \bgroup\scshape{}D.~Robertson\egroup{}, Infinite sumsets in sets with
  positive density,  \emph{Journal of the American Mathematical Society}
  \textbf{37} no.~3 (2024), 637--682. \doi{10.1090/jams/1030}.

\bibitem[KMRR24b]{KMRR24b}
\bgroup\scshape{}B.~Kra\egroup{}, \bgroup\scshape{}J.~Moreira\egroup{},
  \bgroup\scshape{}F.~K. Richter\egroup{}, and
  \bgroup\scshape{}D.~Robertson\egroup{}, A proof of {Erd{\H{o}}s's} {$B+B+t$}
  conjecture,  \emph{Communications of the American Mathematical Society}
  \textbf{4} no.~10 (2024), 480--494. \doi{10.1090/cams/34}.

\bibitem[KMRR25]{KMRR25}
\bgroup\scshape{}B.~Kra\egroup{}, \bgroup\scshape{}J.~Moreira\egroup{},
  \bgroup\scshape{}F.~K. Richter\egroup{}, and
  \bgroup\scshape{}D.~Robertson\egroup{}, Problems on infinite sumset
  configurations in the integers and beyond,  \emph{Bulletin of the American
  Mathematical Society} \textbf{62} no.~4 (2025), 537--574.
  \doi{10.1090/bull/1868}.

\bibitem[KMRR26]{KMRR26}
\bgroup\scshape{}B.~Kra\egroup{}, \bgroup\scshape{}J.~Moreira\egroup{},
  \bgroup\scshape{}F.~K. Richter\egroup{}, and
  \bgroup\scshape{}D.~Robertson\egroup{}, The density finite sums theorem,
  \emph{Inventiones Mathematicae} \textbf{243} no.~1 (2026), 1--31.
  \doi{10.1007/s00222-025-01371-8}.

\bibitem[Kre85]{Krengel85}
\bgroup\scshape{}U.~Krengel\egroup{}, \emph{Ergodic theorems}, \emph{De Gruyter
  Studies in Mathematics} \textbf{6}, Walter de Gruyter \& Co., Berlin, 1985,
  With a supplement by Antoine Brunel. \doi{10.1515/9783110844641}.

\bibitem[MRR19]{MRR19}
\bgroup\scshape{}J.~Moreira\egroup{}, \bgroup\scshape{}F.~K. Richter\egroup{},
  and \bgroup\scshape{}D.~Robertson\egroup{}, A proof of a sumset conjecture of
  {Erd{\H{o}}s},  \emph{Annals of Mathematics} \textbf{189} no.~2 (2019),
  605--652. \doi{10.4007/annals.2019.189.2.4}.

\end{thebibliography}

\end{document}